\def\Q{\mathbb{Q}}
\def\Z{\mathbb{Z}}
\def\cP {{\cal P}}
\def\Rc{\hbox{R}}
\def\Ada{\hbox{Ada}}
\def\Mat{\hbox{Mat}}
\def\TaQ{\hbox{TaQ}}
\newcommand{\HRule}{\rule{\linewidth}{0.5mm}}
\newtheorem{defn}{Definition}
\newtheorem{proposition}[defn]{Proposition}
\newtheorem{theorem}[defn]{Theorem}
\newtheorem{corollary}[defn]{Corollary}
\newtheorem{remark}[defn]{Remark}
\newenvironment{proof}[1]{
 \trivlist \item[\hskip \labelsep{\it #1}]}{\hfill\mbox{$\square$}
  \endtrivlist}
\title{Linear Solving for Sign Determination}
\author{Daniel Perrucci}
\date{}
\begin{document}

\maketitle

\begin{abstract}
We give a specific method to solve with quadratic complexity the linear systems arising in known algorithms to deal with the sign determination problem. 
In particular, this enable us to improve the complexity bound for sign determination in the univariate case.
\end{abstract}

\section{Introduction}\label{intro}

Let $\Rc$ be a real closed field. A basic problem in computational real algebraic geometry is, given a finite set $Z \subset \Rc^k$ and a finite list $\cP = P_1, \dots, P_s$ of polynomials in $\Rc[X_1, \dots, X_k]$,  to determine the list of sign conditions realized by $\cP$ on $Z$.

A general scheme in most algorithms dealing with the sign determination problem consists on the computation of the Tarski-query (also known as Sturm-query) for $Z$ of many products of the given polynomials, in order to relate through a linear system this quantities with the number of elements in $Z$ satisfying each possible sign condition. 
For instance, the naive approach were the Tarski-query of each of the $3^s$ polynomials of type $P_1^{e_1}\dots P_s^{e_s}$ with $e_i = 0, 1, 2$ for $i = 1, \dots, s$ is computed, leads to a  linear system of size $3^s \times 3^s$.  Nevertheless, if $m = \# Z$ at most $m$ sign conditions will be feasible, and then at most $m$ of the coordinates of the solution of the considered linear system will be different from $0$. 

In \cite{BeKoRe}, the exponential complexity arising from the number of Tarski-query computations and the resolution of the linear system in the approach described above is avoided. This is achieved by means of a recursive algorithm where the list $\cP$ is divided in two sublists, the feasible sign conditions for each sublist is computed, and then this information is combined. 
Such combination is obtained by computing  
at most $m^2$ Tarski-queries and solving a 
linear system of size at most $m^2 \times m^2$. 

In \cite{RoySzp}, \cite{Cann} and \cite[Chapter 10]{BasuPollackRoy}, the methods in \cite{BeKoRe} are further developed. In \cite{RoySzp}, an algorithm is given 
where the number
of points in $Z$ satisfying each feasible sign condition for the list $P_1, \dots, P_i$ is computed sequentially for $i = 1, \dots, s$, following the idea that, at each step, each feasible sign condition for $P_1, \dots, P_{i-1}$ may be extended in at most $3$ ways. 
To deal with the addition of the polynomial $P_i$ to the considered list, 
at most $2m$ Tarski-queries are computed and 
a linear system of size at most $3m \times 3m$ is solved. 
In \cite{Cann}, a more explicit way to choose the  polynomials whose Tarski-query is to be computed is given. In \cite[Chapter 10]{BasuPollackRoy}, also the feasible sign conditions for $P_i$ on $Z$ are computed at step $i$, in order to discard beforehand some non-feasible sign conditions for $P_1, \dots, P_{i}$ on $Z$  extending feasible sign conditions for $P_1, \dots, P_{i-1}$ on $Z$.

Depending on the setting,
the Tarski-queries may be computed in different ways, taking a different number of operations in the field $\Rc$, or in a proper domain ${\rm D}$ containing the coefficients of the polynomials in $\cP$ and polynomials defining the finite set $Z$. As treated with general methods, the linear solving part can be done within $O(m^{2.376})$ operations in $\Q$ (see \cite{CopWin}). In \cite[Section 3]{Cann}, where the univariate case is considered, the cost of linear solving dominates the overall complexity. 
In this work, we fix this situation by giving a specific method to solve with quadratic complexity the linear systems involved (Theorem \ref{Algo_res}). 
Even though this method can be used as a subroutine whenever these systems arise, we emphasize the result of its use in the univariate case. Following the complexity analisis in \cite[Section 3.3]{Cann} we obtain:

\begin{corollary} \label{coro_emp}
Given $P_0, P_1, \dots, P_s \in {\rm R}[X]$, $P_0 \not \equiv 0$,  $\deg P_i \le d$ for $i = 0, \dots, s$, the feasible sign conditions for $P_1, \dots, P_s$ on $\{P_0 = 0\}$ (and the number of elements in $\{P_0 = 0\}$ satisfying each of these sign conditions) can be computed within $O(sd^2 \log^3 d)$ operations in ${\rm R}$. Moreover, if $P_0$ has $m$ real roots, this can be done within $O(smd\log(m)\log^2(d))$ operations in ${\rm R}$.
\end{corollary}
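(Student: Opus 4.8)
\emph{Proof strategy.} The plan is to revisit the univariate sign-determination algorithm of \cite[Section~3.3]{Cann} and to substitute the linear-solving routine of Theorem~\ref{Algo_res} for its linear-algebra step, keeping every other ingredient untouched and re-doing the complexity accounting. Recall the shape of that algorithm: the polynomials $P_1, \dots, P_s$ are incorporated into the list one at a time, and at the start of step $i$ one already has the feasible sign conditions for $P_1, \dots, P_{i-1}$ on $Z=\{P_0=0\}$ together with the number of points of $Z$ realizing each of them. Since $\#Z\le\deg P_0\le d$, this list has length $O(d)$, and length $O(m)$ if $P_0$ has exactly $m$ real roots; to pass to step $i$ one computes a bounded number of Tarski-queries $\TaQ(\,\cdot\,,P_0)$ of suitable products of the $P_j$'s, reduced modulo $P_0$ and hence of degree $<\deg P_0\le d$, and then solves a linear system whose matrix is of the special ``adapted'' form used in these algorithms (a Kronecker-type product of small fixed matrices), of size $O(d)\times O(d)$, resp.\ $O(m)\times O(m)$.

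First I would account for the Tarski-query part exactly as it is done in \cite[Section~3.3]{Cann}: each $\TaQ(Q,P_0)$ with $\deg Q,\deg P_0\le d$ is read off from a signed subresultant (signed remainder) sequence produced by the fast half-gcd-based subresultant algorithms, and, adding the cost of forming the required products modulo $P_0$, the Tarski-queries carried out over all $s$ steps contribute $O(sd^2\log^3 d)$ operations in $\Rc$ in general, and $O(smd\log(m)\log^2(d))$ operations when $P_0$ has $m$ real roots.

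Now comes the only new ingredient. In \cite{Cann} the linear systems above are solved with a general $O(m^{2.376})$ (or $O(d^{2.376})$) solver, and, as recalled in the Introduction, this is the step that dominates the overall running time there. Instead, each of these systems is of the form to which Theorem~\ref{Algo_res} applies, so it can be solved in $O(d^2)$, resp.\ $O(m^2)$, operations in $\Q\subseteq\Rc$; summing over the $s$ steps this contributes only $O(sd^2)$, resp.\ $O(sm^2)$, operations, which is swallowed by the Tarski-query cost. Hence the total complexity is now governed by the Tarski-queries, and the two bounds in the statement follow.

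The one point that genuinely requires care is the verification that the linear systems arising in \cite[Section~3.3]{Cann} really do have the structure demanded by Theorem~\ref{Algo_res}, so that the general linear-algebra step may legitimately be replaced by the quadratic routine; and, once this is granted, one must check that no remaining step of the algorithm --- computing the feasible sign conditions of $P_i$ alone at step $i$, maintaining the sign-condition lists, forming the products --- overtakes the Tarski-query cost, and that the logarithmic factors come out as stated. I expect this verification and bookkeeping, rather than any new estimate, to be the main (and essentially the only) obstacle.
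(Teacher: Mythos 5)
Your proposal is correct and follows essentially the same route as the paper, which gives no separate proof of the corollary but simply invokes the complexity analysis of \cite[Section 3.3]{Cann} with the linear-solving step replaced by the quadratic routine of Theorem \ref{Algo_res}, so that the Tarski-query computations dominate and yield the stated bounds. The structural compatibility you flag as the remaining point of care is exactly what Theorem \ref{Algo_res} is set up for: the systems arising at each step are precisely of the form ${\rm Mat}({\rm Ada}(\Sigma),\Sigma)\,c(\Sigma,Z)={\rm TaQ}(\cP_i^{{\rm Ada}(\Sigma)},Z)$.
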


The motivation for this work also comes from \emph{probabilistic} algorithms to determine feasible sign conditions in the multivariate case (\cite{JePeSa}), which produce a 
\emph{geometric resolution} of a set of sample points. In this reduction to the univariate case, 
the degree of the polynomials obtained equals the 
B\'ezout number of some auxiliary polynomial systems, and the complexity of the algorithm depends quadratically on this quantity. Using Corollary \ref{coro_emp}, the treatment of the univariate case to find the feasible sign conditions for the original multivariate polynomials can be done without increasing the overall complexity.

\section{Preliminaries and  Notation}

We will follow mostly the notation in \cite[Chapter 10]{BasuPollackRoy}. In this reference the approach described in Section \ref{intro} is followed with the minor difference that 
the polynomials $P_1, \dots, P_s$ are introduced one at each step from back to front; therefore, the notation is adapted to this order.

For $i = s, \dots, 1$, we call $\cP_i$ the list $P_i, \dots, P_s$,  and, at step $i$, we are given a list $\Sigma = \sigma_1, \dots, \sigma_r$ of elements in $\{0, 1, -1\}^{\cP_i}$ with
$\sigma_1 <_{\hbox{\scriptsize lex}} \dots <_{\hbox{\scriptsize lex}} \sigma_r$ ($0 \prec 1 \prec -1$)
containing, may be properly, all the feasible sign condition for $\cP_i$ on $Z$ and we are to compute the exact list of feasible sign condition for $\cP_i$ on $Z$ and the number of elements in $Z$ satisfying each of these sign conditions. 
Note that the inequality $r \le 3m$ holds at every step.

For $P \in  \Rc[X]$, we note $c(P = 0, Z), c(P > 0, Z)$ and $c(P < 0, Z)$ the number of elements in $Z$ satisfying the condition $P =  0, P > 0$ and $P < 0$ respectively. Recall that the 
Tarski-query of a polynomial $P$ for $Z$ is the number
$$\TaQ(P, Z) = 
c(P > 0, Z) - c(P < 0, Z).$$ 

For $\sigma \in \{0,1, -1\}^{\cP_i}$, we note $c(\sigma, Z)$ the number of elements $x$ in $Z$ satisfying ${\rm sign}(P_j(x)) = \sigma_j$ for $j = i, \dots, s$ and, for a list $\Sigma' = \sigma'_1, \dots, \sigma'_{l}$ of elements in $\{0,1, -1\}^{\cP_i}$, we note $c(\Sigma', Z)$ the vector whose components are $c(\sigma'_1, Z), \dots, c(\sigma'_{l}, Z)$.  We also note $\hat \sigma$ the element of $\{0,1, -1\}^{\cP_{i+1}}$
obtained from $\sigma$ by deleting the coordinate corresponding to $P_i$ and $\hat \Sigma '$ the list $\hat \sigma'_1, \dots, \hat \sigma'_{l}$.

We divide the given list $\Sigma$ in 12 ordered sublists as follows: for $\emptyset \ne B = \{b_1, \dots, b_l\} \subset \{0, 1, -1\}$ with
$b_1 \prec \dots \prec b_l$ 
and for $b \in B$, the list $\Sigma_{b_1 \dots b_l}^{b}$ is composed by those $\sigma \in \Sigma$ such
that
$\sigma(P_i) = b$ and the set $$\{b' \in \{0, 1, -1\} \ | \ \exists \sigma' \in \Sigma \hbox{ such that } \sigma' \hbox{ extends }  \hat \sigma \hbox{ and } \sigma'(P_i) = b'\}$$ equals $B$. For simplicity, we note $\Sigma_0, \Sigma_1$ and $\Sigma_{-1}$ for $\Sigma_0^0, \Sigma_1^1$ and $\Sigma_{-1}^{-1}$ respectively. In addition, since $\hat \Sigma_{b_1 \dots b_l}^{b}$ is the same list for every $b \in B$, we note 
$\hat \Sigma_{b_1 \dots b_l}$ for any such list. 

We also divide the list $\Sigma$ in 3 ordered sublists as follows:
$\Sigma_{(1)}$ is the merge of
lists
$\Sigma_0 , \Sigma_1, \Sigma_{-1},$ $\Sigma_{0 1}^0,$ $\Sigma_{0 -1}^0,$ $\Sigma_{1 -1}^{-1}$ and $\Sigma_{0 1 -1}^0$,
$\Sigma_{(2)}$ is the merge of
lists $\Sigma_{0 1}^1, \Sigma_{0 -1}^{-1}, \Sigma_{1 -1}^{1}$ and $\Sigma_{0 1 -1}^1$ and $\Sigma_{(3)}$ is the same list than $\Sigma_{0 1 -1}^{-1}$.

Consider also the list $\Ada(\Sigma)$ of elements in $\{0,  1, 2\}^{\cP_i}$ (which represents a list of multidegrees) defined recursively by:
$$\left\{ \begin{array}{ll} 0 & \hbox{if } i = s \hbox{ and } r = 1, \cr 0, 1 & \hbox{if } i = s \hbox{ and } r = 2, \cr
0, 1, 2 & \hbox{if } i = s \hbox{ and } r = 3, \cr
0 \times \Ada(\hat \Sigma_{(1)}), \ 1 \times \Ada(\hat \Sigma_{(2)}), \ 2 \times \Ada(\hat \Sigma_{(3)})& \hbox{if } i < s. \cr
\end{array}\right.$$

For a list $A = \alpha_1, \dots, \alpha_{l_1}$ of elements in $\{0,  1, 2\}^{\cP_i}$ and a list $\Sigma' = \sigma'_1, \dots, \sigma'_{l_2}$ of elements in $\{0, 1, -1\}^{\cP_i}$, we note 
$\Mat(A, \Sigma')$ the $\Z^{l_1 \times l_2}$ matrix defined by 
$$\Mat(A, \Sigma')_{j_1j_2} = \sigma'_{j_2}{}^{\alpha_{j_1}},$$
for $j_1 = 1, \dots, l_1$ and $j_2 = 1, \dots, l_2$, with the understanding that $0^0 = 1$.

\begin{remark}
Following this notation, we have that: 
\begin{equation}\label{igualdad}
{\rm Mat}({\rm Ada}(\Sigma), \Sigma) \, c(\Sigma, Z) = {\rm TaQ}(\cP_i^{{\rm Ada}(\Sigma)}, Z)
\end{equation}
and the $r \times r$ matrix ${\rm Mat}({\rm Ada}(\Sigma), \Sigma)$ is invertible (see \cite[Sections 2 and 3]{Cann} or \cite[Proposition 10.65]{BasuPollackRoy}). 
\end{remark}

For a matrix $M$ with rows indexed by
a list $A$ of multidegrees and columns indexed by
a list $\Sigma'$ of sign conditions, and for any sublists $A'$ of $A$ and $\Sigma''$ of $\Sigma'$, we will note, if convenient,
$M_{A'}, M_{\Sigma''}$ and $M_{A', \Sigma''}$ the submatrices
obtained from $M$ by taking only the rows in $A'$, only the columns in
$\Sigma''$, and only the rows in $A'$ and the columns in
$\Sigma''$ respectively. We will use a similar notation for vectors
whose coordinates are indexed by a list of multidegrees or a list of sign conditions.

\section{The specific method for linear solving}

Note that a different order in $\Sigma$ would lead to a permutation of columns in the matrix $\Mat(\Ada(\Sigma), \Sigma)$ and the elements of the vector
$c(\Sigma, Z)$. To explain our method in a simpler way, we will suppose that we change the order in $\Sigma$ in such a way that we find first the elements in $\Sigma_{(1)}$, then those in $\Sigma_{(2)}$ and finally those in $\Sigma_{(3)}$. Nevertheless, this change of order is not actually necessary in the execution of the linear solving algorithm.

If $i = s$, we have that either $r = 1$; $r = 2$ and $\Sigma = 0, 1$;
$r = 2$ and $\Sigma = 0, -1$; $r = 2$ and $\Sigma = 1, -1$ or
$r = 3$. Depending on which of these conditions holds, $\Mat(\Ada(\Sigma), \Sigma)$ is one of the following matrices:
$$\left(\begin{matrix} 1
 \end{matrix}\right), \quad \left(\begin{matrix} 1 & 1 \cr 0  & 1
 \end{matrix}\right),
\quad \left(\begin{matrix} 1 & 1 \cr 0  & -1
 \end{matrix}\right),
\quad \left(\begin{matrix} 1 & 1 \cr 1  & -1
 \end{matrix}\right), \quad
\ \left(\begin{matrix} 1 & 1 & 1 \cr 0 & 1 & -1 \cr 0 & 1 & 1
 \end{matrix}\right).
$$
If $i < s$, consider the matrices $M_1 = \Mat(\Ada(\hat \Sigma_{(1)}), \hat \Sigma_{(1)})$,
$M_2 = \Mat(\Ada(\hat \Sigma_{(2)}),\hat \Sigma_{(2)})$ and
$M_3 = \Mat(\Ada(\hat \Sigma_{(3)}),\hat \Sigma_{(3)})$. Then,
$\Mat(\Ada(\Sigma), \Sigma)$ is the matrix:

$$\left(\begin{array}{c|c|c}
\rule[-6mm]{0.0cm}{1.5cm} \hspace{2mm} M_1 \hspace{2mm} &  M_1' &  M_1''   \cr \hline
\rule[-4mm]{0cm}{1.1cm} X & \hspace{1mm} \tilde M_2 \hspace{1mm}  &  \hspace{-2mm} -M_2' \hspace{-3mm} \cr \hline
\rule[-1mm]{0cm}{0.6cm} Y & Z  &  M_3   \cr
  \end{array}\right)$$

where:
\begin{itemize}
\item $M_1'$ is the matrix formed with the columns of $M_1$ corresponding to sign conditions in $\hat \Sigma_{0 1},
\hat \Sigma_{0 -1}, \hat \Sigma_{1 -1}$ and $\hat \Sigma_{0 1 -1}$,

\item $M_1''$ is the matrix formed with the columns of $M_1$ corresponding to sign conditions in $\hat \Sigma_{0 1 -1}$,

\item $\tilde M_2$ is the matrix obtained from $M_2$ multiplying by $-1$ the columns corresponding to sign conditions in $\hat \Sigma_{0 -1}$,

\item $M_2'$ is the matrix formed with the columns of $M_2$ corresponding to sign conditions in $\hat  \Sigma_{0 1 -1}$,

\item $X = \Mat(\, 1 \times \Ada(\hat \Sigma_{(2)}),\, \Sigma_{(1)}), \
Y = \Mat(\, 2 \times \Ada(\hat \Sigma_{(3)}), \, \Sigma_{(1)})$ and
$Z = \Mat(\, 2 \times \Ada(\hat \Sigma_{(3)}), \, \Sigma_{(2)})$.

\end{itemize}

\begin{remark} \label{relabloq}
\begin{itemize}
 \item The only non-zero columns in matrices 
$X$ and $Y$ are those corresponding to sign conditions in $\Sigma_{1},
\Sigma_{-1}$ and $\Sigma_{1 -1}^{-1}$.
\item The following relations are satisfied:
$$
X_{\Sigma_{1 -1}^{-1}} =  -(M_2)_{\hat \Sigma_{1 -1}}, \quad \quad
Y_{\Sigma_{1 -1}^{-1}} =  Z_{\Sigma_{1 -1}^1}, \quad \quad
Z_{\Sigma_{0 1 -1}^{1}} =  M_3.$$
\item Since $\hat \Sigma_{(3)}$ is included in $\hat \Sigma_{(2)}$, ${\rm Ada}(\hat \Sigma_{(3)})$ is included in ${\rm Ada}(\hat \Sigma_{(2)})$ and then we have:
$$
X_{1 \times {\rm Ada}(\hat \Sigma_{(3)}), \Sigma_{1}} =  Y_{\Sigma_{1}}, \quad
X_{1 \times {\rm Ada}(\hat \Sigma_{(3)}), \Sigma_{-1}} =  -Y_{\Sigma_{-1}}, \quad
X_{1 \times {\rm Ada}(\hat \Sigma_{(3)}), \Sigma_{1-1}^{-1}} =  - Y_{\Sigma_{1-1}^{-1}}.$$
\end{itemize}
\end{remark}

In order to describe our method to solve linear system (\ref{igualdad}), we define the following matrices:

$$
N_1 = \left(\begin{array}{c|c|c}
\rule[-6mm]{0.0cm}{1.5cm} \hspace{1mm} M_1^{-1} \hspace{2mm} &  0 &  0   \cr \hline
\rule[-4mm]{0cm}{1.1cm} 0 & \hspace{2mm} I_2 \hspace{2mm}  &  0 \cr \hline
\rule[-1mm]{0cm}{0.6cm} 0 & 0  &  I_3   \cr
  \end{array}\right) \hspace{-1mm}, \
N_2 = \left(\begin{array}{c|c|c}
\rule[-6mm]{0.0cm}{1.5cm} \hspace{2mm} I_1 \hspace{3mm} &  0 &  0   \cr \hline
\rule[-4mm]{0cm}{1.1cm} -X & \hspace{2mm} I_2 \hspace{2mm}  &  0 \cr \hline
\rule[-1mm]{0cm}{0.6cm} -Y & 0  &  I_3   \cr
  \end{array}\right) \hspace{-1mm}, \
N_3 = \left(\begin{array}{c|c|c}
\rule[-6mm]{0.0cm}{1.5cm} \hspace{2mm} I_1 \hspace{3mm} &  0 &  0   \cr \hline
\rule[-4mm]{0cm}{1.1cm} 0 & \hspace{1mm} M_2^{-1}   &  0 \cr \hline
\rule[-1mm]{0cm}{0.6cm} 0 & 0  &  I_3   \cr
  \end{array}\right) \hspace{-1mm},
$$

$$
N_4 = \left(\begin{array}{c|c|c}
\rule[-6mm]{0.0cm}{1.5cm} \hspace{2mm} I_1 \hspace{3mm} &  0 &  0   \cr \hline
\rule[-4mm]{0cm}{1.1cm} 0 & \hspace{3mm} \tilde I_2 \hspace{2mm}  &  0 \cr \hline
\rule[-1mm]{0cm}{0.6cm} 0 & 0  &  I_3   \cr
  \end{array}\right) \hspace{-1mm}, \
N_5 = \left(\begin{array}{c|c|c}
\rule[-6mm]{0.0cm}{1.5cm} \hspace{2mm} I_1 \hspace{3mm} &  0 &  0   \cr \hline
\rule[-4mm]{0cm}{1.1cm} 0 & \hspace{3mm} I_2 \hspace{2mm}  &  0 \cr \hline
\rule[-1mm]{0cm}{0.6cm} 0 & - \tilde Z  &  I_3   \cr
  \end{array}\right) \hspace{-1mm}, \
N_6 = \left(\begin{array}{c|c|c}
\rule[-6mm]{0.0cm}{1.5cm} \hspace{2mm} I_1 \hspace{3mm} &  0 &  0   \cr \hline
\rule[-4mm]{0cm}{1.1cm} 0 & \hspace{3mm} I_2 \hspace{2mm}  &  0 \cr \hline
\rule[-1mm]{0cm}{0.6cm} 0 & 0 & \hspace{-1mm} M_3^{-1}\hspace{-2mm}   \cr
  \end{array}\right) \hspace{-1mm},
$$

$$
N_7 = \left(\begin{array}{c|c|c}
\rule[-6mm]{0.0cm}{1.5cm} \hspace{2mm} I_1 \hspace{3mm} &  0 &  0   \cr \hline
\rule[-4mm]{0cm}{1.1cm} 0 & \hspace{3mm} I_2 \hspace{2mm}  &  0 \cr \hline
\rule[-1mm]{0cm}{0.6cm} 0 & 0  & \hspace{-1mm}  \frac12 I_3 \hspace{-1mm}   \cr
  \end{array}\right) \hspace{-1mm}, \
N_8 = \left(\begin{array}{c|c|c}
\rule[-6mm]{0.0cm}{1.5cm} \hspace{2mm} I_1 \hspace{3mm} &  0 &  0   \cr \hline
\rule[-4mm]{0cm}{1.1cm} 0 & \hspace{3mm} I_2 \hspace{2mm}  &  I_2' \cr \hline
\rule[-1mm]{0cm}{0.6cm} 0 & 0 &  I_3   \cr
  \end{array}\right) \hspace{-1mm}, \
N_9 = \left(\begin{array}{c|c|c}
\rule[-6mm]{0.0cm}{1.5cm} \hspace{2mm} I_1 \hspace{3mm} &  -I_1' & \hspace{-1mm} -I_1'' \hspace{-1mm}  \cr \hline
\rule[-4mm]{0cm}{1.1cm} 0 & \hspace{3mm} I_2 \hspace{2mm}  &  0 \cr \hline
\rule[-1mm]{0cm}{0.6cm} 0 & 0 & \hspace{-1mm} I_3\hspace{-2mm}   \cr
  \end{array}\right) \hspace{-1mm},
$$

where $I_1, I_2$ and $I_3$ denote the identity matrices which size is the length of $\Sigma_{(1)}, \Sigma_{(2)}$ and $\Sigma_{(3)}$ respectively and, if we index the columns of
$I_1, I_2$ and $I_3$ with $\hat \Sigma_{(1)}, \hat \Sigma_{(2)}$ and $\hat \Sigma_{(3)}$:

\begin{itemize}
\item $\tilde I_2$ is the matrix obtained from $I_2$  multiplying by $-1$ the columns corresponding to sign conditions in $\hat \Sigma_{0 -1}$ and by $\frac12$ the columns corresponding to sign conditions in $\hat \Sigma_{1 -1}$,

\item $\tilde Z$ is the matrix obtained from $Z$  multiplying by $0$ the columns corresponding to sign conditions in $\Sigma_{1 -1}^{1}$,

\item $I_2'$ is the matrix formed with the columns of $I_2$ corresponding to sign conditions in $\hat \Sigma_{0 1 -1}$,

\item $I_1'$ is the matrix formed with the columns of $I_1$ corresponding to sign conditions in
$\hat\Sigma_{0 1},  \hat\Sigma_{0 -1}, \hat\Sigma_{1 -1}$ and $\hat\Sigma_{0 1 -1}$,

\item $I_1''$ is the matrix formed with the columns of $I_1$ corresponding to sign conditions in
$\hat\Sigma_{0 1 -1}$.
\end{itemize}

\begin{proposition} \label{descompo}
The matrix ${\rm Mat}({\rm Ada}(\Sigma),\Sigma)^{-1}$ equals the product $N_9\dots N_1$.
\end{proposition}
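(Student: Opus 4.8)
The plan is to verify the factorization by computing the product $N_1 N_2 \cdots N_9$ (or equivalently, to check that $\mathrm{Mat}(\mathrm{Ada}(\Sigma),\Sigma)\cdot N_9\cdots N_1 = I$), proceeding block by block with respect to the $3\times 3$ block decomposition induced by the partition $\Sigma_{(1)}, \Sigma_{(2)}, \Sigma_{(3)}$. Each $N_k$ is a ``near-identity'' matrix — a block-elementary operation — so right-multiplying $\mathrm{Mat}(\mathrm{Ada}(\Sigma),\Sigma)$ by $N_9$, then by $N_8$, and so on, performs a controlled sequence of column operations, and I would track the matrix $M^{(k)} := \mathrm{Mat}(\mathrm{Ada}(\Sigma),\Sigma)\, N_9 N_8 \cdots N_{10-k}$ at each stage, aiming to show $M^{(9)} = I$. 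The natural strategy is the one these matrices are evidently designed for: first clear the first block-column below the diagonal, then reduce the $(2,2)$ block, then the $(3,3)$ block, then clear the remaining off-diagonal blocks in the correct order.

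**First I would** apply $N_9$ (clearing $M_1'$ and $M_1''$ from the top block-row, using $I_1', I_1''$), then $N_8$ (clearing the $I_2'$-part of the $-M_2'$ block against $\tilde M_2$), getting the top two block-rows into a clean form; here the relation $M_1' = $ (columns of $M_1$) and $M_1'' = $ (columns of $M_1$) make the cancellations $M_1 - M_1' - M_1'' = $ something simple, and one uses that the columns picked out by $I_1'$ and $I_2'$ index exactly the sign conditions appearing in $M_1'$ and $M_2'$. **Next** I would apply $N_7$ and $N_6$ to normalize the $(3,3)$ block $M_3$ (the factor $\frac12 I_3$ absorbing the $2$ that enters because $\mathrm{Ada}$ uses exponent $2$ on $P_i$ in the third block, so $P_i^2 \in \{0,1\}$ contributes a $1$ in rows of block $3$ where one expects, after the column operations, a doubling on the $\Sigma_{(3)}$-columns), then $N_5$ to clear $Z$ against the already-normalized third block (using the relation $Z_{\Sigma_{01-1}^1} = M_3$ and the definition of $\tilde Z$, which zeroes out the $\Sigma_{1-1}^1$ columns precisely because those are handled by a different path), then $N_4$ to undo the sign twist $\tilde M_2$ via $\tilde I_2$ (the $-1$ on $\hat\Sigma_{0-1}$-columns and the $\frac12$ on $\hat\Sigma_{1-1}$-columns matching the definitions of $\tilde M_2$ and the $\pm M_2'$ structure), then $N_3$ to invert $M_2$ in the $(2,2)$ block, and finally $N_2$ (subtracting $X$ times block-row $1$ from block-row $2$ and $Y$ times block-row $1$ from block-row $3$) and $N_1$ ($M_1^{-1}$ on the $(1,1)$ block). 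Throughout, the three bullet-pointed relations in Remark \ref{relabloq} — the identifications $X_{\Sigma_{1-1}^{-1}} = -(M_2)_{\hat\Sigma_{1-1}}$, $Y_{\Sigma_{1-1}^{-1}} = Z_{\Sigma_{1-1}^1}$, $Z_{\Sigma_{01-1}^1} = M_3$, and the restriction relations between $X$ and $Y$ on the $\Sigma_1,\Sigma_{-1},\Sigma_{1-1}^{-1}$ columns — are exactly the algebraic identities needed to make the intermediate cancellations work, so I would invoke them at the precise step where each is used.

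**The main obstacle** will be bookkeeping: the $3\times3$ block structure is really a refinement into the twelve sublists $\Sigma_{b_1\dots b_l}^b$, and the column operations act differently on different sub-blocks (e.g. $\tilde I_2$ scales three different groups of columns by $1$, $-1$, $\tfrac12$), so one must carefully verify that after each $N_k$ the submatrix indexed by each sublist is what the next step expects. In particular, the delicate point is checking that the sequence of operations does not interfere — that clearing $Z$ in step $N_5$ genuinely produces zeros given that $\tilde Z$ (not $Z$) is used, which relies on the fact that the $\Sigma_{1-1}^1$-columns of $Z$ get cancelled later through the $X,Y$ interaction captured by the $N_2$ step and the last bullet of Remark \ref{relabloq}. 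Rather than multiplying out all nine factors at once, I would verify the identity $M^{(k)} = $ (explicit near-triangular form) by induction on $k$, displaying the intermediate block matrix after each $N_k$ and checking the single block-entry that changes; the recursion on $i$ (the $i=s$ base cases are immediate from the five explicit $2\times 2$ and $3\times 3$ matrices, whose inverses one checks directly) then closes the argument since $M_1, M_2, M_3$ are invertible by the Remark following (\ref{igualdad}).
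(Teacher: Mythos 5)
Your proposal is correct and takes essentially the same route as the paper: the paper also proves the factorization by a direct block-by-block computation of the product (it computes $N_9\cdots N_1\,{\rm Mat}({\rm Ada}(\Sigma),\Sigma)$ via row operations applied in the order $N_1,\dots,N_9$, while you track ${\rm Mat}({\rm Ada}(\Sigma),\Sigma)\,N_9\cdots N_1$ via column operations applied from $N_9$ down to $N_1$), relying on exactly the identities $M_1^{-1}M_1'=I_1'$, $M_1^{-1}M_1''=I_1''$, $M_2^{-1}M_2'=I_2'$ together with the relations of Remark \ref{relabloq}. The only blemish is the opening phrase ``the product $N_1N_2\cdots N_9$,'' which is in the wrong order but is immediately superseded by your correct formulation ${\rm Mat}({\rm Ada}(\Sigma),\Sigma)\cdot N_9\cdots N_1=I$.
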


\begin{proof}{Proof:} First, note that 
$$M_1^{-1}M_1' = I_1', \quad \quad \quad
M_1^{-1}M_1'' = I_1'', \quad \quad \quad
M_2^{-1}M_2' = I_2'.$$
Because of the first item of Remark \ref{relabloq} we can conclude that
$$XI_1'' = 0,   \quad \quad \quad YI_1'' = 0,$$
and using the first and second items of Remark \ref{relabloq}, we can conclude  that
$$-XI_1' + \tilde M_2 = \dot M_2, \quad \quad \quad 
-YI_1' + Z = \tilde Z, \quad \quad \quad 
ZI_2' = M_3,$$
where $\dot M_2$ is the matrix obtained from $M_2$ multiplying by $-1$ the columns corresponding to sign conditions in $\hat \Sigma_{0 -1}$ and by $2$ the columns corresponding to sign conditions in $\hat \Sigma_{1 -1}$.
With all these relations, the proof can be done by simple computation of the product $N_9\dots N_1\Mat(\Ada(\Sigma),\Sigma)$.

\end{proof}

The decomposition of $\Mat(\Ada(\Sigma),\Sigma)^{-1}$ as a product of several matrices in Proposition \ref{descompo} leads us to the following recursive algorithm:

\bigskip

\HRule

\textbf{Algorithm}:  Auxlinsolve

Input: $\Sigma, \TaQ(\cP_i^{{\rm Ada}(\Sigma)}, Z)$.

Output: $c(\Sigma, Z)$.

Procedure: \begin{enumerate}
\item If $i = s$, return $\Mat(\Ada(\Sigma), \Sigma)^{-1}\TaQ(\cP_i^{{\rm Ada}(\Sigma)}, Z)$.

\item If $i < s$:

\begin{enumerate}

\item[0.] Initialize $c = \TaQ(\cP_i^{{\rm Ada}(\Sigma)}, Z)$.

\item[1.] $c_{\Sigma_{(1)}} =$ Auxlinsolve$(\hat \Sigma_{(1)} , c_{\Sigma_{(1)}})$.

\item[2.] 
$c_{\Sigma_{(2)}} = c_{\Sigma_{(2)}} -
\Mat(1 \times \Ada(\hat\Sigma_{(2)}), \Sigma_{1})c_{\Sigma_{1}} -
\Mat(1 \times \Ada(\hat\Sigma_{(2)}), \Sigma_{-1})c_{\Sigma_{-1}} - \Mat(1 \times \Ada(\hat\Sigma_{(2)}), \Sigma_{1-1}^{-1})c_{\Sigma_{1-1}^{-1}};$

$c_{\Sigma_{(3)}} = c_{\Sigma_{(3)}} -
\Mat(2 \times \Ada(\hat\Sigma_{(3)}), \Sigma_{1})c_{\Sigma_{1}} -
\Mat(2 \times \Ada(\hat\Sigma_{(3)}), \Sigma_{-1})c_{\Sigma_{-1}} - \Mat(2 \times \Ada(\hat\Sigma_{(3)}), \Sigma_{1-1}^{-1})c_{\Sigma_{1-1}^{-1}}.$

\item[3.] $c_{\Sigma_{(2)}} =$ Auxlinsolve$(\hat \Sigma_{(2)} , c_{\Sigma_{(2)}})$.

\item[4.] $c_{\Sigma_{0-1}^{-1}} = - c_{\Sigma_{0-1}^{-1}};$ 

$c_{\Sigma_{1-1}^1} = \frac12 c_{\Sigma_{1-1}^1}$.

\item[5.]  $c_{\Sigma_{(3)}} = c_{\Sigma_{(3)}} -
\Mat(2 \times \Ada(\hat\Sigma_{(3)}), \Sigma_{01}^1)c_{\Sigma_{01}^1} -
\Mat(2 \times \Ada(\hat\Sigma_{(3)}), \Sigma_{0-1}^{-1})c_{\Sigma_{0-1}^{-1}}
- M_3c_{\Sigma_{01-1}^{1}}$.

\item[6.] $c_{\Sigma_{(3)}} =$ Auxlinsolve$(\hat \Sigma_{(3)} , c_{\Sigma_{(3)}})$.

\item[7.]  $c_{\Sigma_{(3)}} = \frac12 c_{\Sigma_{(3)}}$.

\item[8.]  $c_{\Sigma_{01-1}^1} = c_{\Sigma_{01-1}^1} + c_{\Sigma_{(3)}}$.

\item[9.] $c_{\Sigma_{01}^0} = c_{\Sigma_{01}^0} - c_{\Sigma_{01}^1}$;

$c_{\Sigma_{0-1}^0} = c_{\Sigma_{0-1}^0} - c_{\Sigma_{01}^{-1}}$;

$c_{\Sigma_{1-1}^{-1}} = c_{\Sigma_{1-1}^{-1}} - c_{\Sigma_{1-1}^{1}}$; 

$c_{\Sigma_{01-1}^0} = c_{\Sigma_{01-1}^0}
- c_{\Sigma_{01-1}^1} - c_{\Sigma_{(3)}}$.

\item[10.] Return $c$.
\end{enumerate}
\end{enumerate}

\HRule

\bigskip

\begin{theorem} \label{Algo_res}
Algorithm Auxilinsolve solves linear system (\ref{igualdad}) within  $2r^2$
operations in $\Q$.
\end{theorem}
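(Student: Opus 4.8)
The plan is to prove separately that Algorithm Auxlinsolve is correct and that it respects the claimed operation count. For correctness I would show, by induction on $s-i$, that on any input $(\Sigma,v)$ the algorithm returns $\Mat(\Ada(\Sigma),\Sigma)^{-1}v$; specializing to $v=\TaQ(\cP_i^{\Ada(\Sigma)},Z)$ and using (\ref{igualdad}) then shows that it returns $c(\Sigma,Z)$, i.e.\ that it solves (\ref{igualdad}). For $i=s$ this is step 1. For $i<s$, by the inductive hypothesis the recursive calls in steps 1, 3, 6 return $M_j^{-1}$ applied to their inputs, where $M_j=\Mat(\Ada(\hat\Sigma_{(j)}),\hat\Sigma_{(j)})$ is invertible by the statement after (\ref{igualdad}); granting this, each of steps 1--9 replaces the working vector $c$ by $N_kc$ for $k=1,\dots,9$ in turn (the working vector being initialized to $v$ in step 0), so the returned vector is $N_9\cdots N_1v$, which by Proposition \ref{descompo} equals $\Mat(\Ada(\Sigma),\Sigma)^{-1}v$. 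To see that step $k$ applies $N_k$: steps 1, 3, 6 give the diagonal blocks $M_1^{-1},M_2^{-1},M_3^{-1}$; steps 4 and 7 are the rescalings by $-1$ and $\frac{1}{2}$ recorded by $\tilde I_2$ and $\frac{1}{2}I_3$; steps 8 and 9 are the off-diagonal subtractions recorded by $I_2'$ in $N_8$ and by $-I_1',-I_1''$ in $N_9$, using that inside $\Sigma_{(1)}$ the entries indexed by $\hat\Sigma_{01},\hat\Sigma_{0-1},\hat\Sigma_{1-1},\hat\Sigma_{01-1}$ are exactly $c_{\Sigma_{01}^0},c_{\Sigma_{0-1}^0},c_{\Sigma_{1-1}^{-1}},c_{\Sigma_{01-1}^0}$ (and similarly inside $\Sigma_{(2)}$); and steps 2 and 5 are the subtractions of $Xc_{\Sigma_{(1)}},Yc_{\Sigma_{(1)}}$ and $\tilde Zc_{\Sigma_{(2)}}$. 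This last point is where Remark \ref{relabloq} is used: its first item shows $Xc_{\Sigma_{(1)}}$ and $Yc_{\Sigma_{(1)}}$ reduce to the three matrix-vector products written in step 2 (the only nonzero columns of $X,Y$ being those indexed by $\Sigma_1,\Sigma_{-1},\Sigma_{1-1}^{-1}$), while its second item, together with the definition of $\tilde Z$ (as $Z$ with the $\Sigma_{1-1}^1$-columns zeroed) and the relation $Z_{\Sigma_{01-1}^1}=M_3$, shows $\tilde Zc_{\Sigma_{(2)}}$ reduces to the three products written in step 5.

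For the operation count, let $T_i(r)$ denote the worst-case number of $\Q$-operations the algorithm performs at step $i$ on a list of length $r$; I would prove $T_i(r)\le 2r^2$ by induction on $s-i$. For $i=s$ there are only the five listed matrices, all of size $\le 3$, and a direct check shows applying the relevant inverse to a vector costs at most a small constant, in each case $\le 2r^2$. For $i<s$ put $r_j=\#\Sigma_{(j)}$, so $r=r_1+r_2+r_3$, and note that $\hat\Sigma_{(3)}\subseteq\hat\Sigma_{(2)}\subseteq\hat\Sigma_{(1)}$ (Remark \ref{relabloq}) gives $r_3\le r_2\le r_1$. The recursive calls in steps 1, 3, 6 cost $T_{i+1}(r_1)+T_{i+1}(r_2)+T_{i+1}(r_3)\le 2(r_1^2+r_2^2+r_3^2)$, so it suffices to bound the cost $C$ of steps 2, 4, 5, 7, 8, 9 by $4(r_1r_2+r_1r_3+r_2r_3)=2r^2-2(r_1^2+r_2^2+r_3^2)$. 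Every matrix appearing in steps 2 and 5 has each entry of the form $\tau^\beta$ for a sign vector $\tau$ and an exponent vector $\beta$, hence in $\{-1,0,1\}$, so these steps consist purely of additions and subtractions; since the sign-condition lists indexing their columns are pairwise disjoint sublists of $\Sigma_{(1)}$ (in step 2) and of $\Sigma_{(2)}$ (in step 5), and $\#\Sigma_{01-1}^1=r_3$, the rows-times-columns estimate gives step $2\le(r_2+r_3)r_1$ and step $5\le r_2r_3$. Steps 4, 7, 8, 9 only rescale by $-1$ or $\frac{1}{2}$ and subtract subvectors, each of length bounded by the size of a part of $\Sigma_{(2)}$ or of $\Sigma_{(3)}$ (using $\#\Sigma_{01}^0=\#\Sigma_{01}^1$, $\#\Sigma_{0-1}^0=\#\Sigma_{0-1}^{-1}$, $\#\Sigma_{1-1}^{-1}=\#\Sigma_{1-1}^1$ and $\#\Sigma_{01-1}^0=r_3$), so together they cost at most $2(r_2+r_3)$. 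Hence $C\le(r_2+r_3)r_1+r_2r_3+2(r_2+r_3)$, and a short case analysis ($r_2=0$, which forces $r_3=0$; $r_2\ge 1$ with $r_3=0$; $r_3\ge 1$), using $r_1\ge r_2\ge r_3\ge 0$ and $r_1\ge 1$ whenever $r_2\ge 1$, yields $C\le 4(r_1r_2+r_1r_3+r_2r_3)$. Therefore $T_i(r)\le 2(r_1^2+r_2^2+r_3^2)+C\le 2r^2$.

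I expect the hard part to be the bookkeeping in the operation count: one must determine exactly which sign-condition sublists index the columns of the $\{-1,0,1\}$-valued matrices in steps 2 and 5, check that these sublists are pairwise disjoint and lie inside $\Sigma_{(1)}$ (resp.\ $\Sigma_{(2)}$) so that the crude cost estimates telescope to $(r_2+r_3)r_1$ and $r_2r_3$, and --- the delicate point --- bound the cleanup steps 4, 7, 8, 9 by $O(r_2+r_3)$ rather than merely $O(r_1)$, since otherwise the inequality $C\le 4(r_1r_2+r_1r_3+r_2r_3)$ would fail for $r_1$ large and $r_2$ small. One should also keep track of the degenerate case $r_2=r_3=0$, where the recursion advances $i$ without decreasing $r$; this is why the induction is organized on $s-i$ and not on $r$.
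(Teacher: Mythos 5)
Your proposal is correct and follows essentially the same route as the paper: correctness via the factorization $N_9\cdots N_1$ of Proposition \ref{descompo} (checking that each step applies the corresponding $N_k$), and the $2r^2$ bound by induction with a step-by-step operation count. The only differences are presentational --- you rightly generalize the correctness claim to arbitrary input vectors so the induction through the recursive calls is airtight, and you organize the count via $r_1\ge r_2\ge r_3$ rather than the paper's finer tally over the twelve sublists --- but both yield the same bound by the same argument.
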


\begin{proof}{Proof:}

The correctness of the algorithm follows since, for $j = 0, \dots, 9$, after Step $2.j$ we have computed
$$c = N_j\dots N_1\Mat(\Ada(\Sigma), \Sigma)c(\Sigma, Z).$$

To bound the number of operations needed, we proceed by  induction on $i$. If $i = s$, the result follows, since the inverse of the 5 possible matrices $\Mat(\Ada(\Sigma), \Sigma)$ is pre-computed and the product by $\TaQ(\cP_s^{{\rm Ada}(\Sigma)}, Z)$ takes $r(2r-1)$ operations in $\Q$.

Suppose now $i < s$.
For $\emptyset \ne B = \{b_1, \dots, b_l\} \subset \{0, 1, -1\}$
note by $r_{b_1 \dots b_l}$ the size of the list $\Sigma_{b_1 \dots b_l}^{b}$ for any $b \in B$. Using the inductive hypothesis, the number of operations in each step  is bounded in the following way:
\begin{enumerate}
\item[2.1.] $2(r_0 + r_1 + r_{-1} + r_{0 1} + r_{0 -1} + r_{1 -1} + r_{0 1 -1})^2$.
\item[2.2.] $2(r_{01} + r_{0-1} + r_{1-1} + 2r_{01-1})(r_{1} + r_{-1} + r_{1-1})$.
\item[2.3.] $2(r_{0 1} + r_{0 -1} + r_{1 -1} + r_{0 1 -1})^2$.
\item[2.4.] $r_{0 -1} + r_{1 -1}$.
\item[2.5.] $2r_{01-1}(r_{01} + r_{0-1} + r_{01-1})$.
\item[2.6.] $2r_{01-1}^2$.
\item[2.7.] $r_{01-1}$.
\item[2.8.] $r_{01-1}$.
\item[2.9.] $r_{0 1} + r_{0 -1} + r_{1 -1} + 2r_{01-1}$.

\end{enumerate}

Since the sum of all this numbers is always lower than or equal to $2r^2 = 2(r_0 + r_1 + r_{-1} + 2r_{0 1} + 2r_{0 -1} + 2r_{1 -1} + 3r_{0 1 -1})^2$, the result follows.

\end{proof}

\begin{remark}
The third item of Remark \ref{relabloq} implies that Step 2.2 could be replaced in the following way:
\begin{enumerate}
\item[{\rm 2.2.'}] $v = {\rm Mat}(1 \times {\rm Ada}(\hat\Sigma_{(2)}), \Sigma_{1})c_{\Sigma_{1}};$

$v' = {\rm Mat}(1 \times {\rm Ada}(\hat\Sigma_{(2)}), \Sigma_{-1})c_{\Sigma_{-1}};$

$v'' = {\rm Mat}(1 \times {\rm Ada}(\hat\Sigma_{(2)}), \Sigma_{1-1}^{-1})c_{\Sigma_{1-1}^{-1}};$

$c_{\Sigma_{(2)}} = c_{\Sigma_{(2)}} - v - v' - v'';$

$c_{\Sigma_{(3)}} = c_{\Sigma_{(3)}} - v_{1\times {\rm Ada}(\Sigma_{(3)})} + v'_{1\times {\rm Ada}(\Sigma_{(3)})} + v''_{1\times {\rm Ada}(\Sigma_{(3)})}.$
\end{enumerate}
which takes a smaller number of operations than Step 2.2.

\end{remark}

\end{document}